\theoremstyle{plain}
\newtheorem{theorem}[subsection]{Theorem}
\newtheorem{corollary}[subsection]{Corollary}
\newtheorem{lemma}[subsection]{Lemma}
\theoremstyle{definition}
\newtheorem{cor}[subsection]{Corollary}
\newtheorem{remark}[subsection]{Remark}
\newcommand{\uu}{\cup}% union
\newcommand{\ii}{\cap}% intersection
\newcommand{\UU}{\bigcup}% big union
\newcommand{\II}{\bigcap}% big intersection
\newcommand{\sci}{\subset}% strictly contained in
\newcommand{\es}{\emptyset}% the empty set
\newcommand{\set}[1]{\{#1\}}% set
\newcommand{\ga}{\alpha}
\newcommand{\gb}{\beta}
\newcommand{\gd}{\delta}
\newcommand{\gk}{\kappa}
\newcommand{\gm}{\mu}
\newcommand{\gn}{\nu}
\newcommand{\go}{\omega}
\newcommand{\gq}{\theta}
\newcommand{\gs}{\sigma}
\newcommand{\tit}{\textit}% text italic
\newcommand{\C}[1]{\mathcal{#1}}% Euler Script - only caps, use as \C{A}
\newcommand{\D}[1]{\mathbb{#1}}% Doubled - blackboard bold - only caps, uas as \D{A}
\newcommand{\te}{\text}% same as \mathrm command.
\newcommand{\ol}{\overline}
\newcommand{\ul}{\underline}
\begin{document}

\title{Quantization coefficients in infinite systems}

\author{Eugen Mihailescu and Mrinal Kanti Roychowdhury}

\subjclass[2010]{28A32, 28A80, 28A25, 60B05.}
\keywords{Self-similar measures on limit sets, quantization for infinite iterated function systems,  convergence of probability measures,   $L_r$-Kantorovich-Wasserstein metric.}

\date{}
\maketitle
\pagestyle{myheadings}\markboth{Eugen Mihailescu and Mrinal Kanti Roychowdhury}{Quantization coefficients in infinite systems}

\begin{abstract}
We investigate quantization coefficients for self-similar probability measures $\mu$ on limit sets, which are generated by systems $\C S$ of infinitely many contractive similarities and by probabilistic vectors. The theory of quantization coefficients for infinite systems has significant differences from the finite case. One of these differences is the lack of finite maximal antichains, and the fact that the set of contraction ratios has zero infimum; another difference resides in the specific geometry of $\C S$ and of its non-compact limit set $J$. We prove that, for each $r\in (0, \infty)$, there exists a unique positive  number $\gk_r $,  so that for any $\kappa < \kappa_r < \kappa'$, the $\gk$-dimensional \textit{lower quantization coefficient} of order $r$ for $\mu$ is positive, and we give estimates for the $\kappa'$-dimensional \textit{upper quantization coefficient} of order $r$ for $\mu$.  In particular, it follows that the quantization dimension of order $r$ of $\mu$ exists, and it is equal to $\kappa_r$. The above results allow then to estimate the asymptotic errors of approximating the measure $\mu$ in the $L_r$-Kantorovich-Wasserstein metric, with discrete measures supported on finitely many points.
\end{abstract}

\section{Introduction and general setting.}

The theory of quantization studies the process of approximating probability measures, which are invariant for certain systems, with discrete probabilities having a finite number of points in their support. Of particular interest are the types of behaviors which may be  encountered in this quantization process for various measures.

Let us consider in general, a probability measure $\mu$ on $\D R^d$, a number $r \in (0, \infty)$ and a natural number $n \in \D N$. Then, the $n$-th \tit{quantization error of order $r$} of $\mu$ is defined by:
\[V_{n, r}(\mu):=\te{inf}\set{\int d(x, \ga)^r d\mu(x): \ga \sci \D R^d, \, \te{card}(\ga) \leq n},\]
where $d(x, \ga)$ denotes the distance from an arbitrary point $x$ to the set $\ga$ with respect to the Euclidean norm on $\D R^d$. If $\int \| x\|^r d\mu(x)<\infty$, then there exists some set $\ga$ for which the infimum is achieved (see \cite{GL1}). A set $\ga$ for which the infimum is achieved is called an \textit{optimal set of $n$-means} or \textit{$n$-optimal set of order $r$}, for the probability $\mu$ and for $0<r<\infty$.

For $s>0$,  the \textbf{$s$-dimensional upper, and lower quantization coefficients of order $r$} for the probability measure  $\mu$, are defined (see \cite{GL1}) respectively as: $$\ol{\mathcal{QC}}_{r, s}(\mu):= \limsup_n n  V_{n,r}(\mu)^{\frac s r}, \ \text{and} \ \ul{\mathcal{QC}}_{r, s}(\mu):= \liminf_n n  V_{n,r}(\mu)^{\frac s r}$$

We will be interested below in quantization coefficients for self-similar probability measures $\mu$ for \textit{infinite} systems of contractive similarities $\C S =(S_1, S_2, \ldots)$ and for infinite probability vectors $p = (p_1, p_2, \ldots)$. In this case, the theory and the techniques of proof from the finite case do not work. In particular, we do not have finite maximal antichains, and also the set of the contraction ratios for the maps $S_i, \ i \ge 1$,  has zero infimum.

\

Recall that in the finite case, a finite self-similar system is determined by a set of contractive similarity mappings on $\D R^d$, namely $\set{S_1, S_2, \cdots, S_N}$  with contraction rates $s_1, s_2, \cdots, s_N$, for $N\geq 2$. By \cite{H} for any probability vector $(p_1, p_2, \cdots, p_N)$ there exists a unique Borel probability measure $\mu$, known as a \textit{self-similar measure}, and a unique nonempty compact fractal subset $J$ of $\D R^d$, which is the support of $\mu$, satisfying the self-similarity conditions:
\begin{equation*} \mu=\sum_{j=1}^N p_j \mu \circ S_j^{-1} \te{ and } J=\UU_{j=1}^N S_j(J)\end{equation*}

The finite iterated system $\set{S_1, S_2, \cdots, S_N}$ satisfies the \textit{open set condition},  if there exists a bounded nonempty open set $U \sci \D R^d$ such that $\UU_{j=1}^N S_j(U) \sci U$ and $S_i(U) \II S_j(U) =\es$ for $1\leq i\neq j\leq N$. The iterated system is said to satisfy the \textit{strong open set condition} if there is an open set $U$ as above,  so that $U\ii J \neq \es$, where $J$ is the limit set of the system (see \cite{H}, etc.)

The \textit{upper, and  lower
quantization dimensions} of order $r$ of $\mu$, are defined  respectively as:
\[\ol D_r(\mu): =\limsup_{n \to \infty} \frac{r\log n}{-\log V_{n, r}(\mu)}; \ \ \ul D_r(\mu): =\liminf_{n \to \infty} \frac{r\log n}{-\log V_{n, r}(\mu)} \]
If $\ol D_r(\mu)$ and $\ul D_r(\mu)$ coincide, we call the common value the \tit{quantization dimension of order $r$} of the probability $\mu$, and is denoted by $D_r(\mu)$. Quantization processes form a rich and far-reaching mathematical concept, with many applications (see for eg. \cite{GG}, \cite{GL1}, \cite{Za}).

Under the open set condition,  Graf and Luschgy (see \cite{GL1, GL2}) showed that the quantization dimension  $D_r(\mu)$ of order $r$ of the probability measure $\mu$ exists, and satisfies the following relation,
$\sum_{j=1}^N(p_j s_j^r)^{{\frac{D_r}{r+D_r}}}=1.$
In fact they proved more, namely that the quantization dimension $D_r(\mu)$ also satisfies the following growth conditions for quantization errors (see \cite{GL3}):
\begin{equation}\label{eq1}  0<\liminf_n n  V_{n,r}(\mu)^{\frac{D_r}{r}} \leq \limsup_n n  V_{n,r}(\mu)^{\frac{D_r}{r}}<\infty\end{equation}
Under the open set condition, Lindsay and
Mauldin (see \cite{LM}) determined the quantization dimension of an $F$-conformal measure $m$ associated
with a conformal iterated function system determined by finitely
many conformal mappings. They established a relationship between the
quantization dimension and the temperature function of the thermodynamic formalism arising in multifractal analysis, and proved that the upper quantization coefficient of $m$ is
finite; however, they left it open whether the lower quantization
coefficient is positive. Using a class of finite maximal antichains Zhu gave an answer in \cite{Z}. Later,
following the same techniques of Lindsay and Mauldin, using the H\"older's inequality, Roychowdhury gave a different proof to show that the lower quantization coefficient for the $F$-conformal measure is positive (see \cite{R1}).

\

In this paper, we are interested in the different case of \textbf{infinite systems} of similarities  $(S_n)_{n\geq 1}$ with similarity ratios
$(s_n)_{n\geq 1}$ respectively, satisfying the strongly separated condition. This setting presents several challenges, different from the finite case. For example in the infinite case, the fractal limit set $J$ of the system is not necessarily compact, by contrast to the finite case. The Hausdorff dimension of the limit set $J$ of an infinite conformal IFS is given in general only as the infimum of the values which make the pressure negative; there may be no zero of that pressure, unlike in the finite case. There are examples of infinite systems where the lower box dimension $\ul{\text{dim}}(J)$ is strictly larger than $HD(J)=h$; and  examples where the Hausdorff measure $H_h(J)$ is zero, while for others $H_h(J)>0$ (see \cite{MaU}). 

Also the boundary at infinity, consisting of accumulation points of sequences of type $(S_i(x_i))_i$ with distinct $i$'s, plays a role in the geometric properties of the respective system, see Mauldin and Urba\'nski \cite{MaU}, \ Mihailescu and Urba\'nski \cite{MU}.
For example, in \cite{MU} it was studied the effect of overlaps and of the boundary at infinity, on the dimensions of limit sets of infinite conformal iterated function systems.

\

For the case of invariant measures for finite or infinite IFS, for which the open set condition is not necessarily satisfied and there may exist overlaps, see Mihailescu and Urba\'nski \cite{MU}, \cite{MU1}, \cite{MU2}.

 Moreover, pertaining to our problem of quantization processes, we do not have finite maximal antichains, and the infimum of the contraction rates is zero, which makes the proofs from the finite case not to work in the infinite situation.

As it turns out, estimating quantization coefficients  in the infinite case is also very different from the finite case. By its intrinsic nature, quantization is a procedure of "fitting" a finite set in the non-compact fractal limit set $J$, in such a way that we obtain as much information as possible about the self-similar measure $\mu$ which is supported on $\ol J$. However, when dealing with an infinite system, usually no finite set $F$ can be placed properly such that every set $S_j(X), j \ge 1$, contains a point from $F$. This makes quantization for infinite systems to be very different than for finite systems.

Let then  $\mu$ be the self-similar probability generated by the system $(S_n)_{n \ge 1}$ and by the probability vector
$(p_n)_{n\geq 1}$ (see for eg. \cite{M}, etc). The measure $\mu$ satisfies the following recursive formula:
\[\mu=\sum_{j=1}^\infty p_j \mu\circ S_j^{-1}\]
The measure $\mu$ is supported on the compact closure $\ol J$ of the associated limit set $J$ (the precise definition will be given below in the General Setting). \

\

We will prove in \textbf{Theorem \ref{theorem}} that, under the strongly separated condition,  for each $r\in (0, \infty)$ there exists a unique $\gk_r\in (0, \infty)$ so that
$\sum_{j=1}^\infty(p_j s_j^r)^{{\frac{\gk_r}{r+\gk_r}}}=1,$ \
and for any $\kappa< \kappa_r$, the $\gk$-dimensional lower quantization coefficient of order $r$ for the self-similar measure $\mu$ satisfies the following \textit{asymptotic condition}:
\[0<\liminf_{n\to\infty} n V_{n,r}(\mu)^{\frac{\gk}{r}}\leq \limsup_{n \to \infty} n V_{n,r}(\mu)^{\frac{\gk}{r}}\]
We also show in \textbf{Theorem \ref{theorem}} that for any $\kappa' > \kappa_r$, the $\kappa'$-dimensional upper quantization coefficient for $\mu$ is finite,
$$ \limsup_{n \to \infty} n V_{n,r}(\mu)^{\kappa'/r} = 0$$
In particular, in \textbf{Corollary \ref{cor-j}} we prove that the \textbf{quantization dimension} of order $r$ of $\mu$ exists and it is equal to $\kappa_r$.
\

In addition, we provide estimates for the upper quantization coefficient $\ol{\mathcal{QC}}_{r, \kappa'}(\mu)$ in the above setting.
As a consequence of the main results, we will prove in Corollary \ref{asy} a result about the asymptotic behavior in $n$, of the approximations in the \textit{$L_r$-Kantorovich-Wasserstein metric} of the self-similar probability  measure $\mu$, by \textbf{discrete} probability measures $Q$ which are supported on $n$ points.

 We also give examples of self-similar measures for infinite systems for which we can obtain estimates on the quantization coefficients.

Some partial attempt to find quantization dimension in infinite IFS was tried in \cite{R2} by Roychowdhury. However, the proof of the main result in that paper is incorrect (see Remark \ref{tare} below); therefore it was not used in the current paper. The ideas and methods in our current paper are completely different.

\

\textbf{General Setting.}

The \textit{$n$-th quantization error} for the probability $\mu$ gives, in essence, the minimal average distance (average with respect to $\mu$), from points in the support of $\mu$ to finite sets of cardinality $n$,  and is defined (see \cite{GL1}) by the formula:
$$ V_{n, r}(\mu):=\te{inf}\set{\int d(x, \ga)^r d\mu(x) : \ga \sci \D R^d, \te{ card}(\ga)\leq n}, $$
and denote  $e_{n, r}(\mu):=V_{n, r}(\mu)^{\frac 1 r}$.
A set $\ga \sci \D R^d$ with card$(\ga) \leq n$ is called an \tit{$n$-optimal set of centers for $\mu$ of order $r$} or \tit{$V_{n, r}(\mu)$-optimal set} whenever we have:
\[V_{n, r}(\mu) =\int d(x, \ga)^r d\mu(x)\]

Let $X$ be a nonempty compact subset of $\D R^d$ with $X=\te{cl(int } X)$. 
Let $(S_j)_{j=1}^\infty$ be an infinite set of contractive similarity mappings on $X$ whose contraction ratios are respectively $(s_j)_{j=1}^\infty$, i.e., $d(S_j(x), S_j(y))=s_j d(x,y)$ for all $x, y \in X$, $0<s_j<1$, $j\geq 1$. We shall assume in the sequel that  $$s:=\sup_{j\ge 1} s_j<1$$

A \textit{word} with $n$ letters in $\D N =\{0, 1, 2, \ldots\}$,  $\go:=\go_1\go_2\cdots \go_n \in \D N^n$, is said to have \textit{length} $n$, for $n\geq 1$. Define also $\D N^{fin}:=\UU_{n\geq 1}\D N^n$ to be the set of finite words with letters in $\D N$, of any length. For $\go=\go_1\go_2\cdots \go_n \in \D N^n$, define:
\[S_\go=S_{\go_1}\circ S_{\go_2}\circ \cdots \circ S_{\go_n} \te{ and } s_\go=s_{\go_1}s_{\go_2}\cdots s_{\go_n}\] The empty word $\es$ is the only word of length $0$ and $S_\es=\te{Id}_X$.
For  $\go \in \D N^{fin}\uu \D N^\infty$ and for a positive integer $n$ smaller than the length of $\go$, we denote by $\go|_n$ the word $\go_1\go_2\cdots \go_n$. Notice  that given $\go \in \D N^\infty$, the compact sets $S_{\go|_n}(X)$, $n\geq 1$, are decreasing and their diameters converge to zero. In fact, we have
\begin{equation} \label{eq113} \te{diam}(S_{\go|_n}(X))=s_{\go_1}s_{\go_2}\cdots s_{\go_n}\te{diam}(X)\leq s^n \te{diam}(X) \end{equation}
Hence for an infinite word $\omega$, the set $\pi(\go):=\II_{n=1}^\infty S_{\go|_n}(X)$
is a singleton, and we can define a map $\pi : \D N^\infty \to X$ which, in view of \eqref{eq113} is continuous. One obtains then the following limit set for the above infinite system of similarities,
\[J:=\pi(\D N^\infty)=\UU_{\go \in \D N^\infty}\II_{n= 1}^\infty S_{\go|_n}(X)\]
This fractal limit set $J$ is \textit{not} necessarily compact in the infinite case, by contrast to the finite case (see \cite{MaU}, \cite{MU}). \
 Let $\gs : \D N^\infty \to \D N^\infty$ be the shift map on $\D N^\infty$, i.e., $\gs(\go)=\go_2\go_3\cdots$ where $\go=\go_1\go_2\cdots$. Note that
$\pi\circ \gs(\go)=S_{\go_1}^{-1}\circ \pi(\go)$, and hence, rewriting $\pi(\go)=S_{\go_1}(\pi(\gs(\go)))$, we see that $J$ satisfies the invariance condition:
\[J=\UU_{i=1}^\infty S_i(J)\]
One says that the above iterated function system satisfies the \tit{open set condition} (OSC) if there exists a bounded nonempty open set $U \sci X$ (in  topology of $X$), so that $S_i(U) \sci U$ for every $i \in \D N$ and $S_i(U) \ii S_j(U)=\es$ for every pair $i, j \in \D N$ with $i\neq j$; and the \tit{strong open set condition} (SOSC) if $U$ can be chosen so that $U \ii J \neq \es$ (see \cite{H}, \cite{M}, etc). Since in our infinite case the limit set $J$ may be non-compact, we will use a stronger condition, namely we say that $\mathcal S$ is \textit{strongly separated} if $\mathcal S$ satisfies the strong open set condition with a bounded open set $U$ and  in addition $d(S_i(U), S_j(U)) >0$ for any $i \ne j$.

\

In the current paper, we assume that the infinite set of similarities is strongly separated.  Notice also that in the infinite systems case,  the open set condition and the strong open set condition are not equivalent, unlike in the finite case (see \cite{SW}).

Let now $(p_1, p_2, \cdots)$ be an infinite probability vector, with $p_j>0$ for all $j\geq 1$. Then there exists a unique Borel probability measure $\mu$ on $\D R^d$ (see \cite{H}, \cite{MaU}, \cite{M}, etc.), such that
\begin{equation*} \label{eq10} \mu=\sum_{j=1}^\infty p_j \mu \circ S_j^{-1}\end{equation*}
This measure $\mu$ is  called the \tit{self-similar measure} induced by the infinite iterated function system of self-similar mappings $(S_j)_{j\geq 1}$ and by the infinite probability vector $(p_1, p_2, \cdots)$, and is obtained as the projection $\pi_*(\nu_{(p_1, p_2, \ldots)})$, where $\nu_{(p_1, p_2, \ldots)}$ is the product measure on $\mathbb{N}^\infty$ induced by $(p_1, p_2, \ldots)$.
One defines the boundary at infinity $\C S(\infty)$ as the set of accumulation points of sequences of type $(S_{i_j}(x_{i_j}))_j$, for distinct integers $i_j$ (see \cite{MaU}).
The self-similar measure $\mu$ is supported in the closure $\ol J$ of the limit set $J$, which is given by $\ol J = J \cup \mathop{\cup}\limits_{\omega \in \D N^{fin}} S_\omega(\C S(\infty))$.\

For the above fixed probability vector $(p_1, p_2, \ldots) $ and contraction vector $(s_j)_{j \ge 1}$, and for arbitrary $q, t \in \mathbb R$, we define the pressure function:
\begin{equation} \label{eq0} P(q, t)=\log \sum_{j=1}^\infty p_j^q s_j^t. \end{equation}

Assume moreover that for every $q \in [0, 1]$, there exists an $u \in \mathbb R$ such that
\begin{equation}\label{5}
0\le P(q, u)<\infty
\end{equation}
In this case, for an arbitrary $q \in \D R$, let $\gq(q)=\inf \set{t \in \D R : \sum_{j=1}^\infty p_j^q s_j^{t} <\infty}$.
Then, for $q\in \D R$ and $t \in (\gq(q), \infty)$, we have $P(q, t)<\infty$. This is similar to the condition of finiteness of entropy in the case of endomorphisms of Lebesgue spaces.

A particular case when the pressure is finite, is when the infinite probability vector $(p_1, p_2, \cdots)$ and the contraction ratios $(s_j)_{j\geq 1}$ satisfy the following \textit{condition}: there exists a constant $a>0$ such that $
\sup_{j}\left|\log p_j- a\log s_j\right|<\infty$.
Then there exists a constant $K\geq 1$ such that for $j\geq 1$,
\begin{equation}\label{eq4001}
 K^{-1} s_j^a \leq p_j\leq K s_j^a
\end{equation}
Condition (\ref{5}) is then satisfied if we have (\ref{eq4001}), since we know that
$s_j^a \le K p_j, \ j \ge 1,$ and since $(p_1, p_2, \ldots)$ is a probability vector, hence for every $q \in [0, 1]$ there exists some $t \in \mathbb R$ such that $\sum_{j=1}^\infty p_j^q s_j^{t} <\infty$. \

The following lemmas are easy to prove.

\begin{lemma} \label{lemma1234} Assuming that condition (\ref{5}) is satisfied above,  it follows that, if $q \in \D R$ is fixed, then the function $t\mapsto P(q, t)$ is strictly decreasing, convex and continuous on $(\gq(q), \infty)$.
\end{lemma}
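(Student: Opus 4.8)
The plan is to fix $q \in \D R$ and to work throughout with the series $f(t) := \sum_{j=1}^\infty p_j^q s_j^t$, so that $P(q,t) = \log f(t)$. First I would record that for every $t > \gq(q)$ the series converges: by the definition of $\gq(q)$ as an infimum, there exists $t_0$ with $\gq(q) \le t_0 < t$ and $f(t_0) < \infty$, and since $0 < s_j < 1$ we have $s_j^{t} = s_j^{t_0} s_j^{t-t_0} \le s_j^{t_0}$ (because $t - t_0 > 0$), whence $f(t) \le f(t_0) < \infty$. Together with $f(t) > 0$, which holds since every term $p_j^q s_j^t$ is strictly positive, this shows that $P(q, \cdot)$ is well defined and real-valued on $(\gq(q), \infty)$.

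For strict monotonicity, I would observe that for each fixed $j$ the map $t \mapsto p_j^q s_j^t$ is strictly decreasing (again using $s_j < 1$). Hence for $\gq(q) < t_1 < t_2$ we have $p_j^q s_j^{t_1} > p_j^q s_j^{t_2}$ termwise, and summing the two convergent series gives $f(t_1) > f(t_2)$. Composing with the increasing function $\log$ then shows that $P(q, \cdot)$ is strictly decreasing.

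The heart of the argument is convexity, which I would derive from H\"older's inequality. Given $t_1, t_2 \in (\gq(q), \infty)$ and $\lambda \in (0,1)$, the termwise identity $p_j^q s_j^{\lambda t_1 + (1-\lambda) t_2} = (p_j^q s_j^{t_1})^\lambda (p_j^q s_j^{t_2})^{1-\lambda}$ holds. Applying H\"older with the conjugate exponents $1/\lambda$ and $1/(1-\lambda)$ to the sequences $a_j = (p_j^q s_j^{t_1})^\lambda$ and $b_j = (p_j^q s_j^{t_2})^{1-\lambda}$ yields
\[
f(\lambda t_1 + (1-\lambda) t_2) = \sum_{j=1}^\infty a_j b_j \le \Big(\sum_{j=1}^\infty p_j^q s_j^{t_1}\Big)^{\lambda} \Big(\sum_{j=1}^\infty p_j^q s_j^{t_2}\Big)^{1-\lambda} = f(t_1)^{\lambda}\, f(t_2)^{1-\lambda}.
\]
Taking logarithms turns this into $P(q, \lambda t_1 + (1-\lambda) t_2) \le \lambda P(q, t_1) + (1-\lambda) P(q, t_2)$, which is exactly the convexity of $P(q, \cdot)$ on $(\gq(q), \infty)$.

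Finally, continuity comes essentially for free: a finite convex function on an open interval is automatically continuous (indeed locally Lipschitz), so the convexity established above already delivers continuity of $P(q,\cdot)$ on the open interval $(\gq(q), \infty)$. I do not expect any serious obstacle in this lemma, consistent with the paper's remark that it is easy to prove; the only points requiring genuine care are verifying convergence of $f$ on all of $(\gq(q), \infty)$, and keeping the H\"older exponents straight, since the fixed parameter is already named $q$ and must not be confused with a H\"older conjugate exponent.
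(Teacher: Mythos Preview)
Your argument is correct. The paper itself omits the proof of this lemma, merely remarking that it is ``easy to prove,'' so there is no detailed comparison to make; your write-up supplies exactly the kind of standard verification the authors had in mind, and your use of H\"older's inequality for the convexity step is the same device the paper invokes in the very next lemma when proving convexity of $q\mapsto\gb(q)$.
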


\begin{lemma}\label{zero}
Assume that condition (\ref{5}) is satisfied. Then for any  $q\in [0, 1]$, there exists a unique $t=\gb(q)\in (\theta(q), \infty)$ such that $P(q, \gb(q))=0$.
\end{lemma}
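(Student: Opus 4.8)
The plan is to read off uniqueness immediately from the monotonicity in Lemma \ref{lemma1234}, and to obtain existence from a sign change of $P(q,\cdot)$ via the intermediate value theorem, after pinning down the behaviour of $P(q,t)$ at the two ends of the interval $(\gq(q),\infty)$. Fix $q\in[0,1]$. I first note that this interval is nonempty: condition (\ref{5}) gives some $u\in\D R$ with $0\le P(q,u)<\infty$, and the finiteness $P(q,u)<\infty$ means $\sum_{j=1}^\infty p_j^q s_j^u<\infty$, so that $\gq(q)\le u<\infty$. By Lemma \ref{lemma1234} the map $t\mapsto P(q,t)$ is then continuous and \emph{strictly} decreasing on $(\gq(q),\infty)$, which already forces any zero in this interval to be unique.

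For existence I would exhibit one argument where $P(q,\cdot)$ is nonnegative and one where it is negative. The nonnegative value is provided by condition (\ref{5}) itself, through $P(q,u)\ge 0$; by strict monotonicity this propagates to the whole part of the interval to the left of $u$, so $P(q,\cdot)$ is $\ge 0$ near the left endpoint $\gq(q)$. The negative value I would extract from the uniform contraction bound $s=\sup_{j\ge 1}s_j<1$, which is the genuinely infinite-system input here. Picking any $u_0>\gq(q)$, for every $t>u_0$ I factor and estimate
\[
\sum_{j=1}^\infty p_j^q s_j^{t}=\sum_{j=1}^\infty p_j^q s_j^{u_0}\,s_j^{\,t-u_0}\le s^{\,t-u_0}\sum_{j=1}^\infty p_j^q s_j^{u_0},
\]
where I used $0<s_j\le s<1$ and $t-u_0>0$. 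Since $u_0>\gq(q)$ the sum $\sum_j p_j^q s_j^{u_0}$ is finite, so the right-hand side tends to $0$ as $t\to\infty$; consequently $P(q,t)=\log\!\big(\sum_{j} p_j^q s_j^{t}\big)\to-\infty$, and in particular $P(q,t_1)<0$ for some $t_1\in(\gq(q),\infty)$.

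Putting these together, $P(q,\cdot)$ is continuous on $(\gq(q),\infty)$, nonnegative near the left endpoint and strictly negative at $t_1$, so the intermediate value theorem produces a point $\gb(q)\in(\gq(q),\infty)$ with $P(q,\gb(q))=0$, unique by strict monotonicity. I expect the only slightly delicate point to be confirming that this zero is interior rather than at the endpoint $\gq(q)$: this is automatic as soon as $P(q,\cdot)$ takes a strictly positive value in the interior — in particular when the $u$ furnished by (\ref{5}) satisfies $u>\gq(q)$ — while the sole borderline case $\lim_{t\downarrow\gq(q)}P(q,t)=0$ can be treated separately using that $\sum_j p_j^q s_j^t\uparrow\sum_j p_j^q s_j^{\gq(q)}$ as $t\downarrow\gq(q)$ by monotone convergence. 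Apart from this endpoint bookkeeping and the limit computation at $+\infty$, the proof is the routine monotonicity-plus-intermediate-value argument, in line with the paper's remark that the lemma is easy.
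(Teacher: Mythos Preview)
Your proof is correct and follows essentially the same route as the paper: strict monotonicity and continuity from Lemma~\ref{lemma1234} give uniqueness, and the factoring $s_j^{t}=s_j^{u_0}\,s_j^{t-u_0}\le s_j^{u_0}\,s^{t-u_0}$ forces $P(q,t)\to-\infty$, after which the intermediate value theorem yields the zero. You are a touch more careful than the paper about the borderline cases (whether $u>\gq(q)$ and whether $P(q,u)=0$ or $>0$), but the core argument is identical.
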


\begin{proof}
By Lemma~\ref{lemma1234}, for a given $q \in [0, 1]$, the function $P(q, t)$ is strictly decreasing and continuous on $(\gq(q), \infty)$.
Since $0<P(q, u)<\infty$ for some $u\in (\gq(q), \infty)$, in order to conclude the proof it therefore suffices to show that $\lim_{t \to \infty} P(q,t)=-\infty$. For $t > u$,
$$
P(q, t)=\log \sum_{j=1}^\infty p_j^q s_j^t=\log \sum_{j=1}^\infty p_j^q s_j^u s_j^{t-u}
\leq \log \sum_{j=1}^\infty p_j^q s_j^u s^{t-u}=P(q, u) +(t-u) \log s.
$$
Since $s<1$, it follows that $\lim_{t \to \infty} P(q, t) =-\infty$, and thus the lemma is obtained.
\end{proof}

\begin{lemma}
The function $q \mapsto \gb(q)$ given in Lemma \ref{zero}, is strictly decreasing, convex and continuous on $[0, 1]$.
\end{lemma}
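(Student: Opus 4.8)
The plan is to regard $\gb$ as the function defined implicitly by $P(q,\gb(q))=0$, equivalently by $\sum_{j=1}^\infty p_j^q s_j^{\gb(q)}=1$ (Lemma \ref{zero}), and to read off all three properties from the \emph{joint} convexity of the pressure together with the strict monotonicity in $t$ already recorded in Lemma \ref{lemma1234}. The basic observation is that
$$P(q,t)=\log\sum_{j=1}^\infty \exp\bigl(q\log p_j+t\log s_j\bigr)$$
is a jointly convex function of $(q,t)\in\D R^2$ (with value $+\infty$ where the series diverges): each partial sum $\log\sum_{j=1}^N\exp(q\log p_j+t\log s_j)$ is a log-sum-exp of affine functions, hence convex, and $P$ is the increasing limit, hence the supremum, of these convex functions.

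Second, I would establish strict monotonicity of $\gb$. Since $0<p_j<1$ for every $j$ (all $p_j>0$ and $\sum_j p_j=1$ with infinitely many terms), the map $q\mapsto p_j^q$ is strictly decreasing, so for fixed $t$ the series $\sum_j p_j^q s_j^t$, and hence $P(q,t)$, is strictly decreasing in $q$ on its domain of finiteness. Thus for $q_1<q_2$ we get $P(q_2,\gb(q_1))<P(q_1,\gb(q_1))=0=P(q_2,\gb(q_2))$, and because $t\mapsto P(q_2,t)$ is strictly decreasing on $(\gq(q_2),\infty)$ by Lemma \ref{lemma1234}, this forces $\gb(q_1)>\gb(q_2)$.

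Third, for convexity I would use the sublevel-set description. For each $q\in[0,1]$, strict monotonicity and continuity of $t\mapsto P(q,t)$ (Lemma \ref{lemma1234}), together with $P(q,t)=+\infty$ for $t<\gq(q)$, give
$$\{\,t\in\D R: P(q,t)\le 0\,\}=[\gb(q),\infty).$$
Hence the set $\{(q,t)\in[0,1]\times\D R: P(q,t)\le 0\}$ is exactly the epigraph of $\gb$ over $[0,1]$. Being the intersection of a sublevel set of the convex function $P$ with the convex strip $[0,1]\times\D R$, this set is convex; an epigraph is convex iff the function is convex, so $\gb$ is convex on $[0,1]$. Equivalently, one argues directly: for $q=\gl q_1+(1-\gl)q_2$ joint convexity gives $P(q,\gl\gb(q_1)+(1-\gl)\gb(q_2))\le 0=P(q,\gb(q))$, and strict monotonicity in $t$ yields $\gb(q)\le\gl\gb(q_1)+(1-\gl)\gb(q_2)$.

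Finally, convexity already gives continuity of $\gb$ on the open interval $(0,1)$, so only the endpoints remain, and this is the one place needing a separate argument, since a convex function may jump upward at an endpoint. At $q=1$ I combine the two one-sided facts: monotonicity gives $\lim_{q\to1^-}\gb(q)\ge\gb(1)$, while convexity gives $\gb(1)\ge\lim_{q\to1^-}\gb(q)$, so $\gb$ is continuous at $1$. At $q=0$ monotonicity and convexity point the same way, so I instead pass to the limit in the defining relation: taking $q_n\downarrow 0$ we have $\gb(q_n)\uparrow L:=\lim_{q\to0^+}\gb(q)\le\gb(0)$, and since each term $p_j^{q_n}s_j^{\gb(q_n)}\to s_j^{L}$, Fatou's lemma applied to $\sum_j p_j^{q_n}s_j^{\gb(q_n)}=1$ yields $\sum_j s_j^{L}\le 1$, i.e. $P(0,L)\le 0$; as $\gb(0)$ is the unique zero of the strictly decreasing $P(0,\cdot)$, this forces $L\ge\gb(0)$ and hence $L=\gb(0)$. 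The endpoint continuity, and in particular checking that the relevant limits stay inside the region of finiteness of $P$, is the main technical point; everything else is a formal consequence of joint convexity and Lemma \ref{lemma1234}.
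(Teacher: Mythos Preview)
Your argument is correct and, at its core, parallels the paper's: strict monotonicity comes from $p_j<1$ for all $j$ (the paper phrases this via $p=\sup_j p_j<1$), and convexity comes from H\"older's inequality, which is exactly the content of your ``joint convexity of log-sum-exp'' observation. The epigraph/sublevel-set packaging is a cleaner way to say what the paper does by a direct contradiction using H\"older, but the underlying inequality is the same.

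Where your proof genuinely goes beyond the paper is in the continuity at the endpoints. The paper simply writes ``$\gb(q)$ is convex and hence continuous,'' but as you correctly observe, a convex function on a closed interval can jump upward at an endpoint, and at the left endpoint $q=0$ both convexity and monotonicity only give $\lim_{q\to 0^+}\gb(q)\le\gb(0)$. Your Fatou argument---passing to the limit in $\sum_j p_j^{q_n}s_j^{\gb(q_n)}=1$ to obtain $\sum_j s_j^{L}\le 1$, hence $L\ge\gb(0)$---closes this gap and is the one step the paper omits. At $q=1$ your combination of monotonicity (giving $\ge$) and the chord bound from convexity (giving $\le$) is also the right way to finish. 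So your proof follows the same route but is more complete on the one point the paper treats casually.
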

\begin{proof}
Let $p=\sup\set{p_1, p_2, \cdots }$. Clearly $p<1$. For any two points $q, q+\gd \in [0, 1]$, where $\gd>0$, we have to show that $\gb(q+\gd) < \gb(q)$. If not let $\gb(q+\gd) \geq \gb(q)$. Then
\begin{align*}
0=P(q+\gd, \gb(q+\gd)) \leq P(q+\gd, \gb(q)) =\log \sum_{j=1}^\infty p_j^{q+\gd} s_j^{\gb(q)} \leq \log \sum_{j=1}^\infty p_j^q p^\gd s_j^{\gb(q)},
\end{align*}
hence $0\leq P(q, \gb(q))+\gd \log p=\gd\log p<0$,
which is a contradiction;  thus $\gb(q+\gd)<\gb(q)$. To show $\gb(q)$ is convex, let $q_1, q_2 \in [0, 1] $ and $a_1, a_2 > 0$ with $a_1+a_2=1$. If $\beta(\cdot)$ is not convex, then there exist $a_1, a_2, q_1, q_2$ such that $\gb(a_1q_1+a_2q_2) > a_1\gb(q_1)+a_2 \gb(q_2)$. Then using H\"older's inequality, we have
\begin{align*}
0&=P(a_1 q_1 +a_2q_2, \gb(a_1q_1+a_2q_2))< P(a_1 q_1 +a_2q_2, a_1\gb(q_1)+a_2 \gb(q_2))\\
&=\log \sum_{j=1}^\infty p_j^{a_1q_1+a_2q_2} s_j^{a_1\gb(q_1)+a_2 \gb(q_2)}\leq \log \Big(\sum_{j=1}^\infty p_j^{q_1}s_j^{\gb(q_1)}\Big)^{a_1} \Big(\sum_{j=1}^\infty p_j^{q_2}s_j^{\gb(q_2)}\Big)^{a_2}\\
& =a_1P(q_1, \gb(q_1)) +a_2P(q_2, \gb(q_2))=0,
\end{align*}
thus contradiction; so $\gb(a_1q_1+a_2q_2) \leq a_1\gb(q_1)+a_2 \gb(q_2)$ i.e., $\gb(q)$ is convex and hence continuous.
\end{proof}

The function $(q, t) \mapsto P(q, t)$ is called the \tit{topological pressure function} corresponding to the given infinite iterated function system. The function $\gb(q)$, sometimes denoted by $T(q)$, is called the \tit{temperature function} (as in \cite{HJKPS}).

\begin{remark}
If $q=0$ then, from \eqref{eq0} we have
$\sum_{j=1}^\infty s_j^{\gb(0)}=1,$ \
i.e., $\gb(0)$ gives the Hausdorff dimension $\te{dim}_\te{H}(J)$ of the infinite self-similar set $J$ (it was shown in \cite{M} that this is the case). Moreover, $P(1, 0)=0$, which gives $\gb(1)=0$.
\end{remark}

\

\section{The quantization coefficients for self-similar measures in the case of infinite systems. }

For arbitrary $r >0$, let us define the auxiliary function $h : (0, 1] \to \D R$ by
$
h(x):=\frac{\gb(x)}{rx}, \ x \in (0, 1]
$, where $\beta(\cdot)$ was defined in Section 1, in terms of the pressure function $P(\cdot)$ of our infinite system.
We know that $\gb(1)=0$ and
  $\gb(0)=\te{dim}_\te{H}(J)$, and so $h(1)=0$ and $\lim_{x\to 0+}
  h(x)=\infty$. Moreover, the function $h$ is continuous and strictly decreasing
   on $(0, 1]$. Hence there exists a unique $q_r \in (0,
  1)$ such that $h(q_r)=1$, i.e.,
$
\gb(q_r)=rq_r,
$  hence $P(q_r, \beta(q_r)) = 0$.
We assume also condition (\ref{5}).
Then, from the above definitions and lemmas it follows that for every $r>0$ there exists a unique number
$\gk_r \in (0, \infty)$, $\kappa_r = \frac{\beta(q_r)}{1-q_r}$, and thus we have the formula
\begin{equation}\label{kr}
P\left(\frac{\gk_r}{r+\gk_r}, \frac{r\gk_r}{r+\gk_r}\right)=0
\end{equation}

\

We now give the main result about quantization coefficients of the self-similar measure $\mu$,  in the infinite system case:

\

\begin{theorem} \label{theorem}
Consider an infinite iterated function system of contractive similarities $\mathcal{S}= (S_1, S_2, \ldots)$ which is strongly separated, and $J$ be its possibly non-compact limit set. Consider the infinite vector $(s_1, s_2, \ldots)$ consisting of the contraction rates of $\mathcal{S}$, and also an infinite probability vector $(p_1, p_2, \cdots)$, such that condition (\ref{5}) above is satisfied. Let us consider $\mu$ to be the self-similar probability measure associated to $\mathcal{S}$ and to $(p_1, p_2, \ldots)$. Denote by $P(q, t)$ the corresponding pressure function, and by $\beta(q)$ the zero of the function $P(q, \cdot)$, and for $r >0$, let $\kappa_r= \frac{\beta(q_r)}{1-q_r}$.

 Then, for any $r
\in (0, \infty)$ and for any $\kappa<  \kappa_r < \kappa'$, the following estimates on the lower/upper quantization coefficients of order $r$ for the self-similar measure $\mu$ (supported on $\ol J$) are true: $$0<\liminf_{n\to\infty} n  V_{n,r}(\mu)^{\gk/r} \ \ \ \text{and} \ \  \ \limsup_{n\to \infty} n  V_{n,r}(\mu)^{\kappa'/r} = 0 $$
\end{theorem}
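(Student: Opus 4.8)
The plan is to bound the errors $V_{n,r}(\mu)$ from above and below by powers of $n$, organized around the critical exponent $\xi:=\frac{\kappa_r}{r+\kappa_r}\in(0,1)$, which by \eqref{kr} satisfies $\sum_{j=1}^\infty(p_js_j^r)^\xi=1$. The basic objects are the stopping-time antichains $\Gamma_\ep:=\setm{\go\in\D N^{fin}}{p_\go s_\go^r\le\ep<p_{\go^-}s_{\go^-}^r}$ for $\ep\in(0,1)$, where $\go^-$ denotes $\go$ with its last letter deleted. Each $\Gamma_\ep$ is a maximal antichain, so by multiplicativity of $p_\go$ and $s_\go$ together with $\sum_j p_j=1$ and $\sum_j(p_js_j^r)^\xi=1$ one gets the two invariance identities $\sum_{\go\in\Gamma_\ep}p_\go=1$ and $\sum_{\go\in\Gamma_\ep}(p_\go s_\go^r)^\xi=1$. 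The decisive new feature compared with the finite case is that $\Gamma_\ep$ is \emph{infinite} (because $\inf_j s_j=0$), so one cannot place a single center in every cylinder $S_\go(\ol J)$; the whole argument is built around circumventing this. Throughout I use that, by strong separation, $\mu(S_\go(\ol J))=p_\go$, that $\mu$ restricted to $S_\go(\ol J)$ is the rescaled copy $p_\go\,\mu\circ S_\go^{-1}$ of $\mu$, that $\te{diam}(S_\go(X))=s_\go\te{diam}(X)$, and that $\mu$ is non-degenerate, so $V_{1,r}(\mu)>0$.

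For the upper bound I would use self-similarity to set up a recursion: for any $M$ and positive integers $n_1,\dots,n_M$ with $n_1+\dots+n_M\le n$, placing the images $S_j(\ga^{(j)})$ of $n_j$-optimal sets $\ga^{(j)}$ for $\mu$ inside the first $M$ pieces and ignoring the rest gives
\[ V_{n,r}(\mu)\le\sum_{j=1}^M p_j s_j^r\,V_{n_j,r}(\mu)+\te{diam}(X)^r\sum_{j>M}p_j. \]
Fix $\kappa''\in(\kappa_r,\kappa')$ and put $\xi'':=\frac{\kappa''}{r+\kappa''}>\xi$; since $p_js_j^r<1$ for every $j$, one has $\sum_j(p_js_j^r)^{\xi''}<\sum_j(p_js_j^r)^\xi=1$, i.e.\ the problem is \emph{strictly subcritical} at the exponent $\xi''$. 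I would then choose $M$ so large that the residual mass $\sum_{j>M}p_j$ is tiny while $\sum_{j\le M}(p_js_j^r)^{\xi''}<1$, and verify by induction on $n$ (distributing the $n_j$ proportionally to $(p_js_j^r)^{\xi''}$, as dictated by Hölder) the bound $V_{n,r}(\mu)\le C\,n^{-r/\kappa''}$; the strict subcriticality is exactly what lets the contraction factor $\big(\sum_{j\le M}(p_js_j^r)^{\xi''}\big)^{1/\xi''}<1$ absorb the tail term. Once this is established, for the given $\kappa'>\kappa''$ we obtain $n\,V_{n,r}(\mu)^{\kappa'/r}\le C'\,n^{\,1-\kappa'/\kappa''}\to0$, which is the asserted vanishing of the upper coefficient.

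For the lower bound, fix a set $\ga$ with $\te{card}(\ga)\le n$ and choose the scale $\ep=\ep(n)\asymp n^{-1/\xi}$. Since the cylinders $\set{S_\go(\ol J):\go\in\Gamma_\ep}$ are pairwise disjoint, at most $n$ of them meet $\ga$. For each of the remaining unoccupied cylinders of \emph{substantial} size (those with $p_\go s_\go^r$ comparable to $\ep$), strong separation forces the nearest center of $\ga$ to lie at distance $\gtrsim s_\go$, so by the scaling of $\mu$ and $V_{1,r}(\mu)>0$ the contribution satisfies $\int_{S_\go(\ol J)}d(x,\ga)^r\,d\mu\ge c_0\,p_\go s_\go^r\gtrsim c_0\,\ep$. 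Summing over at least a constant multiple of $\ep^{-\xi}$ such unoccupied substantial cylinders (their number exceeds $2n$ for this choice of $\ep$) yields $V_{n,r}(\mu)\gtrsim \ep^{-\xi}\cdot\ep=\ep^{1-\xi}\asymp n^{-(1-\xi)/\xi}=n^{-r/\kappa_r}$, and hence $V_{n,r}(\mu)\ge c\,n^{-r/\kappa}$ for any $\kappa<\kappa_r$; this gives $\liminf_n n\,V_{n,r}(\mu)^{\kappa/r}\ge c^{\kappa/r}>0$.

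The main obstacle is precisely the infinitude of the antichains $\Gamma_\ep$, which enters on both sides but is most delicate in the lower bound. In the upper bound it is defused by the strict subcriticality at $\xi''>\xi$: the geometric contraction swallows the infinitely many discarded small cylinders. In the lower bound one must instead guarantee that a definite proportion $\gtrsim\ep^{-\xi}$ of the cylinders in $\Gamma_\ep$ have size comparable to $\ep$ and are simultaneously unoccupied and well separated from $\ga$; because $\inf_j s_j=0$, a cylinder's size may drop abruptly at its last letter and, worse, many small cylinders may cluster near the boundary at infinity $\C S(\infty)$, so a single center of $\ga$ could be close to many of them at once. Controlling this clustering, so that at most $O(n)$ substantial cylinders are effectively spoiled, is the heart of the argument, and is where strong separation and the finiteness condition (\ref{5}) (through tail bounds on $\sum_{j>M}(p_js_j^r)^\xi$) are essential; the slack afforded by taking $\kappa<\kappa_r$ rather than $\kappa=\kappa_r$ is what absorbs the unavoidable loss of constants in these infinite estimates.
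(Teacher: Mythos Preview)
Your plan has a genuine gap on each side, and in both places the paper's argument differs from yours in exactly the way needed to close it.

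\textbf{Upper bound.} Your recursion
\[
V_{n,r}(\mu)\le\sum_{j=1}^M p_j s_j^r\,V_{n_j,r}(\mu)+\te{diam}(X)^r\sum_{j>M}p_j
\]
is correct, but the induction you describe cannot close: the tail term $T:=\te{diam}(X)^r\sum_{j>M}p_j$ is a \emph{positive constant}, while the target bound $Cn^{-r/\kappa''}$ tends to $0$. If you plug the inductive hypothesis into the sum and optimize the $n_j$ by H\"older, the main term becomes $\big(\sum_{j\le M}(p_js_j^r)^{\xi''}\big)^{1/\xi''}\cdot C n^{-r/\kappa''}$, and you would need $T\le\big(1-\text{contraction}\big)\,C n^{-r/\kappa''}$, which fails for large $n$. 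Iterating the recursion $L$ times only produces $V_{n,r}(\mu)\le A^L V_{1,r}+T/(1-A)$ with $A=\sum_{j\le M}p_js_j^r$, again bounded below by the constant $T/(1-A)$. ``Ignoring the rest'' is precisely what does not work in the infinite case. The paper's remedy is a geometric one: it observes that for $j$ large all $S_j(X)$ lie inside a single auxiliary similarity image $T_1(X)$ with ratio $\tilde s_{N+1}<1$, and replaces the infinite alphabet by the finite alphabet $\{1,\dots,N,N+1\}$ with $\tilde S_{N+1}=T_1$, $\tilde p_{N+1}=\sum_{j>N}p_j$. Then $\sum_{i=1}^{N+1}(\tilde p_i\tilde s_i^r)^{\eta}<1$, one takes the finite stopping-time antichain $F_n\subset\{1,\dots,N+1\}^*$ with $\te{card}(F_n)\le n$, places one center in each $\tilde S_\go(X)$, and obtains $V_{n,r}(\mu)\le C(\rho(N)/n)^{(1-\eta)/\eta}$. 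The point is that under iteration the tail mass is now multiplied by $\tilde s_{N+1}^r$ each time, so it contracts rather than accumulating as a constant.

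\textbf{Lower bound.} Your antichain $\Gamma_\ep$ is infinite, and the two facts you need both fail in general. First, there is no reason for $\gtrsim\ep^{-\xi}$ of its members to be ``substantial'' ($p_\go s_\go^r\asymp\ep$): since $\inf_j p_js_j^r=0$, the drop at the last letter can be arbitrarily large, so most of the mass $\sum_{\go\in\Gamma_\ep}(p_\go s_\go^r)^\xi=1$ can sit on cylinders with $p_\go s_\go^r\ll\ep$. Second, strong separation only gives $d(S_i(U),S_j(U))>0$ for each fixed pair $i\ne j$; the infimum over all pairs may be $0$, so an unoccupied cylinder need not be at distance $\gtrsim s_\go$ from $\ga$ --- you yourself flag this clustering issue as ``the heart of the argument'' but do not resolve it. The paper avoids both difficulties by exploiting the slack $\kappa<\kappa_r$ in a different way: since $\sum_i(p_{\gs^{(i)}}s_{\gs^{(i)}}^r)^{\kappa_r/(r+\kappa_r)}=1$ along a suitable disjoint family $(J_{\gs^{(i)}})_{i\ge1}$ and $\tfrac{\kappa}{r+\kappa}<\tfrac{\kappa_r}{r+\kappa_r}$, a \emph{finite} subcollection $\gs^{(1)},\dots,\gs^{(m)}$ already satisfies $\sum_{i=1}^m(p_{\gs^{(i)}}s_{\gs^{(i)}}^r)^{\kappa/(r+\kappa)}\ge1$. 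Being finitely many, these cylinders have a positive minimum pairwise distance $\gd>0$, and one runs the standard Graf--Luschgy induction (splitting an optimal $Z_n$ among the $m$ pieces, using $e_{n,r}^r\ge\sum_{i=1}^m p_{\gs^{(i)}}s_{\gs^{(i)}}^r e_{k_i(n),r}^r$ and the reverse H\"older inequality) to get $n e_{n,r}^\kappa\ge\chi>0$ for all $n$. No varying antichains, no clustering analysis, and no counting of ``substantial'' cylinders are needed.
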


\begin{proof}

We first want to show that for $\kappa < \kappa_r$, the \textbf{lower quantization coefficient} $\ul{\mathcal{QC}}_{\kappa, r}(\mu)$ is positive, i.e.  that $\liminf\limits_{n\to \infty} n V_{n, r}(\mu)^{\gk/r}>0$, where $\mu$ is the self-similar measure associated to $(S_j)_j $ and to the probabilistic vector $(p_j)_{j \ge 1}$, and where $\gk_r$ is the unique number satisfying the sum condition:
\[\sum_{j=1}^\infty \Big(p_js_j^r\Big)^{\frac{\gk_r}{r+\gk_r}}=1.\]

Let $\tilde \nu$ be the self-similar probability measure corresponding to the infinite system  $(S, \gamma)$ where $S=\set{S_1, S_2, \cdots}$ and $\gamma=(\gamma_1,  \gamma_2, \cdots )$ is the probability vector with $\gamma_j=(p_js_j^r)^{\frac{\gk_r}{r+\gk_r}}, \ j \ge 1$. This measure $\tilde \nu$ can be constructed as the image through the canonical projection $\pi$, of the product measure $\nu_{(\gamma_1, \gamma_2, \ldots)}$ on $\D N^\infty$ associated to the probability vector $(\gamma_1, \gamma_2, \ldots)$; so we have $\tilde \nu = \pi_*(\nu_{(\gamma_1, \gamma_2, \ldots)})$.

Consider  now $U$ to be a bounded open set satisfying the strong separated condition, i.e. $U\II J\neq \es$, and $S_j(U) \sci U$ and $d(S_i(U), S_j(U)) >0$ for $i\neq j$. Then it is easy to show that there exists a finite sequence of integers $\xi$, such that   $J_{\xi}\sci U$, where we denote by $J_\zeta:= S_\zeta(J)$ for arbitrary finite sequence $\zeta$.
Let us take then a finite sequence $\xi$ as above, and define the positive constant $\eta_0:=1-\frac 1 2 \gamma_\xi$. Then, for every nonempty set $V\sci J$ which is open with respect to the induced topology on $J$, it can be proved as in \cite{GL2} that there exists an integer $n\in \D N$ and finite sequences $(\gs^{(k)})_{1\leq k\leq n}$ in $\D N^{fin} \setminus \set{\es}$, such that the sets $J_{\gs^{(1)}}, \cdots, J_{\gs^{(n)}}$ are pairwise disjoint in $V$ and satisfy the following  condition (saying basically that their union has large $\tilde \nu$-measure):
\begin{equation}\label{nuV}
\tilde \nu(V \setminus \UU_{k=1}^n J_{\gs^{(k)}})\leq \eta_0 \cdot \,\tilde \nu(V)
\end{equation}

Moreover, employing the last inequality, one can then show  that
there exists a sequence $(\gs^{(i)})_i$ in $\D N^{fin} \setminus \set{\es}$, such that the associated sets  $J_{\gs^{(i)}}, i \ge 1$ are pairwise disjoint and satisfy:
\begin{equation}\label{infty}
\sum_{i=1}^\infty \tilde\nu(J_{\gs^{(i)}})=1
\end{equation}

We are now ready to prove the lower bound for the quantization coefficients for $\mu$. 
Consider $0<r<\infty$ be fixed and $\gk_r$ as in (\ref{kr}), and let an arbitrary $\kappa< \kappa_r$. Then, we want to show that $\mathop{\liminf}\limits_{n \to \infty} n V_{n,r}(\mu)^{\gk/r} >0$.

By the formula in (\ref{infty}) and from the mutual disjointness of the sets $J_{\gs^{(i)}}, \ i \ge 1$,  we have:
\[1=\sum_{i=1}^\infty \tilde\nu(J_{\gs^{(i)}})=\sum_{i=1}^\infty \Big(p_{\gs^{(i)}}s_{\gs^{(i)}}^r\Big)^{\frac{\gk_r}{r+\gk_r}}\]
However $\frac{\kappa}{r+\kappa} < \frac{\gk_r}{r+\gk_r}$, hence there exists an associated positive integer $m = m(\kappa)$, such that
$$\sum_{i=1}^m \Big(p_{\gs^{(i)}}s_{\gs^{(i)}}^r\Big)^{\frac{\kappa}{r+\kappa}}\geq 1$$\
Now from \cite{GL1} it follows that for every $n\in \D N$, there exists an optimal set $Z_n \sci\D R^d$ with card$(Z_n)\leq n$, such that
\[e_{n, r}^r(\mu) =\int_J d(x, Z_n)^r d\mu(x)\]
Let us define now $\gd_n=\sup_{x \in J} d(x, Z_n)$. Then one has  $\lim_{n\to \infty} \gd_n=0$. But the sets $J_{\gs^{(1)}}, \cdots, J_{\gs^{(m)}}$ are pairwise disjoint and moreover from the strong separated condition $d(S_i(J), S_j(J)) >0$ for any $i \neq j$, therefore we obtain the inequality
\[\gd:=\min\set{d(J_{\gs^{(i)}}, J_{\gs^{(j)}}) : 1\leq i, j\leq m, \, i\neq j}>0\]
Thus, there must exist an integer $n_0 \in \D N$, such that $\gd_n<\frac \delta 2$, for all $n \geq n_0$.
Recalling that $\delta_n=\mathop{\sup}\limits_{x\in J}d(x, Z_n)$, and that $J_{\sigma^{(i)}} \subset J$, we will now look at the subsets of $Z_n$ formed by those points that are closer to $J_{\sigma^{(i)}}$; namely for $n\geq n_0$ and $i\in\set{1, 2, \cdots, m}$, define the set
$Z_{n, i}=\set{a \in Z_n : d(a, J_{\gs^{(i)}}) \leq \gd_n}$. Denote $ k_i(n)=\te{card}(Z_{n, i})$; then
clearly we have $k_i(n) \geq 1$. But, we cannot have a point $x$ in two such sets $Z_{n, i}, Z_{n, j}, i \neq j$ since then it would follow $d(J_{\sigma^{(i)}}, J_{\sigma^{(j)}}) \le 2 \delta_n  < \delta$, thus contradiction with the definition of $\delta$. So the sets $Z_{n, i}$ must be disjoint. Since $Z_{n, i}, i=1, 2, \cdots, m$ are mutually disjoint and contained in $Z_n$ (recall that $Z_n$ has at most $n$ elements), we get that  $\sum_{i=1}^m k_i(n)\leq n$. Hence $k_i(n) \leq n-1, \ i=1, 2, \cdots, m$. As in \cite{GL2} we obtain the inequalities:
\begin{align*} e_{n, r}^r &=\int d(x, Z_n)^r d\mu(x) \geq \sum_{i=1}^m \int_{J_{\gs^{(i)}}} d(x, Z_n)^r d\mu(x)=\sum_{i=1}^m \int_{J_{\gs^{(i)}}} d(x, Z_{n, i})^r d\mu(x)\\
&=\sum_{i=1}^m p_{\gs^{(i)}}s_{\gs^{(i)}}^r \int_{J} d(x, S_{\gs^{(i)}}^{-1}(Z_{n, i}))^r d\mu(x)\geq \sum_{i=1}^m p_{\gs^{(i)}}s_{\gs^{(i)}}^r e_{k_i(n), r}^r
\end{align*}

Define now $\chi=\chi(\kappa)=\min \set{n e_{n, r}^{\gk} : n \leq n_0}$; then $\chi>0$. We show by induction that $\chi \leq n e_{n, r}^{\kappa}$ for $n\ge n_0$. In the induction step, let us assume that $\chi \leq j e_{j, r}^\kappa$ for $j \leq n-1$ and $n-1\geq n_0$. Since $k_i(n) \le n-1$, we can apply the induction step in the last displayed inequality, thus:
$$e_{n, r}^r \geq \sum_{i=1}^m p_{\gs^{(i)}}s_{\gs^{(i)}}^r \chi^{\frac r\kappa}k_i(n)^{-\frac r\kappa}$$
Now, by the generalized H\"older's inequality, we have
$$\sum_{i=1}^m p_{\gs^{(i)}}s_{\gs^{(i)}}^rk_i(n)^{-\frac r \kappa} \geq \Big(\sum_{i=1}^m (p_{\gs^{(i)}}s_{\gs^{(i)}}^r)^{\frac {\kappa}{\kappa+r}}\Big)^{1+\frac r \kappa} \cdot \Big(\sum_{i=1}^m k_i(n)\Big)^{-\frac r\kappa}$$

Recall however that $\sum_{i=1}^m (p_{\gs^{(i)}}s_{\gs^{(i)}}^r)^{\frac {\kappa}{r+\kappa}} \geq 1$ and $\sum_{i=1}^m k_i(n) \leq n$, hence
$e_{n, r}^r \geq \chi^{\frac r\kappa} n^{-\frac r\kappa}$, and $n e_{n, r}^\kappa \geq \chi$. Then by induction, for all $n\ge n_0$, $n e_{n, r}^\kappa \geq \chi>0$. Hence we obtain:
\begin{equation}\label{le}
\liminf_{n\to \infty} n e_{n, r}^{\gk} \geq \chi(\kappa) >0,
\end{equation}
 and therefore  for arbitrary $\kappa< \kappa_r$, the $\kappa$-lower quantization coefficient of order $r$ for $\mu$ is positive.

%
%\begin{lemma}\label{lemma567} Let $0<r<\infty$ be fixed, and $m\in \D N$. If $q_1, q_2, \cdots, q_m \in (0, 1)$ with $q_1+q_2+\cdots +q_m=1$, and $\gn_1, \gn_2, \cdots, \gn_m$ are probability measures on $J$, then there exists a constant $C>0$ such that for all $n\in\D N$, we have
%\[V_{n, r}(q_1 \gn\circ S_{i_1}^{-1} +\cdots+q_m \gn_m\circ S_{i_m}^{-1}) \leq C\sum_{j=1}^N q_js_{i_j}^r,\]
%where $s_{i_j}$ are the similarity ratios of $S_{i_j}$ for all $1\leq j\leq m$.
%
%\end{lemma}
%\begin{proof}
%Let $n\in \D N$ and  $\ga\sci \D R^d$ with card$(\ga)\leq n$. Then for any $1\leq j\leq m$, since
%\[\int d(x, \ga)^r d(q_j\gn\circ S_{i_j}^{-1})(x) =q_j\int (S_{i_j}(x), \ga)^r d\gn_j(x) =q_js_{i_j}^r \int d(x, S_{i_j}^{-1} \ga)^r d\gn_j(x)\]
%that is, \[\int d(x, \ga)^r d(q_j\gn\circ S_{i_j}^{-1})(x)\leq q_js_{i_j}^r (\te{diam}(J))^r,\]
%we have
%\begin{align*}
%&V_{n, r}(q_1 \gn\circ S_{i_1}^{-1} +\cdots+q_m \gn_m\circ S_{i_m}^{-1})\\
% &\leq \int d(x, \ga)^r d(q_1 \gn\circ S_{i_1}^{-1} +\cdots+q_m \gn_m\circ S_{i_m}^{-1})(x)\\
%&\leq (\te{diam}(J))^r \sum_{j=1}^N q_js_{i_j}^r.
%\end{align*}
%Take $C=(\te{diam}(J))^r$, and thus $r$ being fixed the lemma follows.
%\end{proof}

\

 Next, we prove  the \textbf{upper bound} of the upper quantization coefficients $\ol{\C{QC}}_{r, \kappa'}(\mu)$ in the infinite self-similar case, for arbitrary  $\kappa'>\kappa_r$. \

Let us first fix an arbitrary number $\kappa> \kappa_r$ and denote by $\eta:= \frac{\kappa}{r + \kappa}$. Then by the definition of $\kappa_r$, we have $\mathop{\sum}\limits_{i \ge 1} (p_i s_i^r)^{\frac{\kappa_r}{r+\kappa_r}} = 1$. So since $\eta > \frac{\kappa_r}{r+ \kappa_r}$, there exists a number $\alpha=\alpha(\eta)$ such that
\begin{equation}\label{al}
\mathop{\sum}\limits_{i \ge 1} (p_i s_i^r)^\eta < \alpha < 1
\end{equation}

Notice now that since $\ol J$ is compact, we can find a finite number of contractive similarities $T_1, \ldots, T_K$ on $X$ such that $S_i(X) \subset T_1(X) \cup \ldots \cup T_K(X), \ i \ge 1$.
Without loss of generality we can assume that all sets $S_j(X)$ are contained in $T_1(X)$ for all $j \ge j_0$, for some large fixed integer $j_0$.
Since $\alpha = \alpha(\eta) < 1$, there exists some integer $N \ge j_0$ such that $$(\mathop{\sum}\limits_{j >N} p_j)^\eta < \frac{1-\alpha}{2}$$ As $\alpha$ depends on $\eta$, also the above integer $N = N(\eta)$ depends on $\eta$. \
Let us define now the finite system of contractive similarities $\tilde S_i, 1 \le i \le N+1$, where $\tilde S_i = S_i, 1 \le i \le N$ and $\tilde S_{N+1} = T_1$, under the above assumption about $T_1$. And define also $\tilde p_i = p_i, 1 \le i \le   N$, $\tilde p_{N+1} = \mathop{\sum}\limits_{i >N} p_i$. We shall denote by $\tilde s_i$ the contraction ratio of $\tilde S_i$, for $ 1 \le i \le N+1$.
Recall that, by our assumption we have $S_i(X) \subset \tilde S_{N+1}(X), \ \forall i >N$.
On the other hand, from the self-similarity condition of the measure $\mu$, we have the decomposition
\begin{equation}\label{dec}
\mu = \mathop{\sum}\limits_{i \ge 1} p_i \mu\circ S_i^{-1} = \mathop{\sum}\limits_{i=1}^N p_i \mu\circ S_i^{-1} + \mathop{\sum}\limits_{j >N} p_j \mu\circ S_j^{-1}
\end{equation}
For $\eta$ and $N$ as above, let us introduce also the following numbers from $(0, 1)$,  $$\gamma_i:= (\tilde p_i \tilde s_i^r)^\eta, \ \ 1 \le i \le N+1$$
Consider now an arbitrary integer $n \ge 2$.
For a finite set $\C F$ of integers, denote by $\C F^*$ the set of all finite sequences of any length, with elements in $\C F$.
For a finite sequence $\omega = (\omega_1, \ldots, \omega_p) \in \{1, \ldots, N+1\}^*, \ p\ge 1$, denote by $\gamma_\omega:= \gamma_{\go_1} \ldots \gamma_{\omega_p}$. Also we denote by $\omega^- = (\omega_1, \ldots, \omega_{p-1})$ to be the truncation of $\omega$ obtained by cutting the last element.

We want now to decompose $\mu$ successively, using (\ref{dec}) up to certain maximal finite sequences $\omega \in \{1, \ldots, N+1\}^*$, until we achieve that all the corresponding  $\gamma_\omega$'s are "almost equal" to $\frac 1n$.
Let us define then the following set of finite sequences determined by $N$ and $n$,
$$F_n:=\{\omega \in \{1, \ldots, N+1\}^*, \ \gamma_\omega \le \frac 1n\cdot \rho(N)^{-1}, \ \gamma_{\omega^-} > \frac 1n\rho(N)^{-1}\},$$where $\rho(N):= \inf\{\gamma_1, \ldots, \gamma_{N+1}\}$.
It follows that if $\omega \in F_n$, then $\gamma_\omega > \frac 1n$.  Also since we assumed that $\tilde p_{N+1}^\eta < \frac{1-\alpha}{2}$ and $\mathop{\sum}\limits_{i=1}^N \gamma_i < \alpha$, and recalling the definition of the $\gamma_i$'s, we obtain
\begin{equation}\label{gamma}
\mathop{\sum}\limits_{i=1}^{N+1} \gamma_i < 1
\end{equation}
Then, recalling that $\gamma_\omega >\frac 1n, \omega \in F_n$, and since we have $1 > \mathop{\sum}\limits_{\omega \in F_n} \gamma_\omega  \ge Card(F_n) \cdot \frac{1}{n}$, we obtain:
\begin{equation}\label{F}
Card(F_n) \le n
\end{equation}
In the identity (\ref{dec}) for $\mu$, we can then continue decomposing successively until reaching the value $\frac 1n$ for $\gamma_\omega$, i.e., we can split $\mu$ according to all finite sequences $\omega \in F_n$. In order to see this, let us deduce from (\ref{dec}) the following decomposition:
\begin{align*}
\mu= &\mathop{\sum}\limits_{i=1}^N p_i\cdot (\mathop{\sum}\limits_{j=1}^N p_j \mu\circ S_j^{-1})\circ S_i^{-1} + \mathop{\sum}\limits_{i=1}^Np_i\cdot (\mathop{\sum}\limits_{j >N} p_j\mu\circ S_j^{-1}) \circ S_i^{-1} + \\
&+\mathop{\sum}\limits_{j >N} p_j\cdot (\mathop{\sum}\limits_{k =1}^N p_k\mu\circ S_k^{-1}) \circ S_j^{-1} + \mathop{\sum}\limits_{j >N}p_j \cdot (\mathop{\sum}\limits_{k >N} p_k \mu\circ S_k^{-1}) \circ S_j^{-1}
\end{align*}
Notice that if a set $B$ has a point in $S_i \tilde S_j(X)$ for some $i, j \in \{1, \ldots, N\}$, then we have $$\int d(x, B)^r d(\mu\circ S_j^{-1} \circ S_i^{-1}) \le s^r_i s^r_j C,$$ for a constant $C >0$. And if $B$ has a point in $S_i S_j(X)$ for some $1\le i \le N$ and $j >N$, then $$\int d(x, B)^r d(\mu\circ S_j^{-1} \circ S_i^{-1}) \le s_i^r \tilde s_{N+1}^r C,$$ since $S_j(X) \subset \tilde S_{N+1}(X)$.
If we take a set $B$ with at least $(N+1)^2$ points such that $B$ has a point in each of the sets $\tilde S_i \tilde S_j, i , j \in \{1, \ldots, N+1\}$, then, since $S_i(X) \subset \tilde S_{N+1}(X), \ i > N$, we obtain the following estimate for the $n$-th quantization error of order $r$ of $\mu$, $$V_{n, r}(\mu) \le C\cdot\big(\mathop{\sum}\limits_{i, j =1}^Np_i p_j s_i^r s_j^r + \mathop{\sum}\limits_{i =1}^N p_i s_i^r (\mathop{\sum}\limits_{j >N} p_j)\tilde s_{N+1}^r + \mathop{\sum}\limits_{j=1}^N p_js_j^r (\mathop{\sum}\limits_{i>N} p_i) \tilde s_{N+1}^r + \mathop{\sum}\limits_{j, k >N} p_jp_k \tilde s_{N+1}^{2r}\big),$$
where $C$ is a positive constant independent of $N$. Similarly we can do this argument for the set $F_n$ instead of $\{1, \ldots, N+1\}$, and we can take a set $B$ of cardinality $n$, which has points in each of the sets $\tilde S_\omega(X)$ for $\omega \in F_n$; this is possible since, as we saw in (\ref{F}), $Card(F_n) \le n$. It follows then similarly as above that
\begin{equation}\label{ue}
V_{n, r}(\mu) \le C\cdot \mathop{\sum}\limits_{\omega \in F_n} \tilde p_\omega \tilde s_\omega^r = C \cdot(\frac 1n)^{\frac{1-\eta}{\eta}} \rho(N)^{\frac{1-\eta}{\eta}}\cdot \mathop{\sum}\limits_{\omega \in F_n} \gamma_\omega \le C \cdot(\frac{\rho(N)}{n})^{\frac{1-\eta}{\eta}}
\end{equation}
Hence recalling that $N$ depends on $\eta$ (hence on $\kappa$), we obtain the following estimate for the $\kappa$-dimensional upper quantization coefficient of order $r$ of $\mu$,
$$\mathop{\limsup}\limits_{n\to \infty} \ n V_{n, r}(\mu)^{\kappa/r} \le C(\kappa)< \infty,$$
where $C(\kappa)$ is a positive constant depending on $\kappa$. In fact if we now take $\kappa'$ arbitrarily larger than $\kappa$ and since $\mathop{\lim}\limits_{n \to \infty}V_{n, r}(\mu)= 0$, we conclude that, for any $\kappa' > \kappa_r$, $$\mathop{\limsup}\limits_{n\to \infty} n V_{n, r}(\mu)^{\kappa'/r} =0$$

\end{proof}

From the above inequalities (\ref{le}) and (\ref{ue}) we obtain also \textit{computable estimates} for the lower and the upper quantization coefficients of order $r$ for the probability measure $\mu$.
We do not know  if the $\kappa_r$-dimensional lower coefficient for $\mu$ of order $r$ is always positive (respectively the $\kappa_r$-upper quantization coefficient of order $r$ being finite) for infinite systems. \
 
In particular, from the estimates above for the $\kappa$-lower/$\kappa'$-upper quantization coefficients of $\mu$ of order $r$, and by taking $\kappa, \kappa' \to \kappa_r$, we obtain that the \textbf{quantization dimension} of order $r$ of $\mu$ exists and is equal to \textbf{$\kappa_r$}. Thus the following result holds:

\

\begin{cor}\label{cor-j}
In the setting of Theorem \ref{theorem}, the quantization dimension $D_r(\mu)$ exists, and $$D_r(\mu) = \kappa_r$$
\end{cor}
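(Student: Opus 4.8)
The plan is to read off both the existence and the value of $D_r(\mu)$ directly from the two coefficient estimates proved in Theorem~\ref{theorem}, by transferring them through the logarithm, in the spirit of the finite-system growth condition \eqref{eq1} of Graf and Luschgy \cite{GL3}. Recall that $\ul D_r(\mu) = \liminf_{n\to\infty} \frac{r\log n}{-\log V_{n,r}(\mu)}$ and $\ol D_r(\mu) = \limsup_{n\to\infty} \frac{r\log n}{-\log V_{n,r}(\mu)}$; since $V_{n,r}(\mu)\to 0$, the quantity $-\log V_{n,r}(\mu)$ is eventually positive, so these expressions make sense for large $n$.

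First I would prove $\ul D_r(\mu) \geq \kappa_r$. Fix any $\kappa < \kappa_r$. By Theorem~\ref{theorem} the lower quantization coefficient $\liminf_n n V_{n,r}(\mu)^{\kappa/r}$ is positive, so there is a constant $c_0 > 0$ with $V_{n,r}(\mu) \geq c_0\, n^{-r/\kappa}$ for all large $n$. Taking logarithms gives $-\log V_{n,r}(\mu) \leq \frac{r}{\kappa}\big(\log n - \log c_0\big)$, whence $\frac{r\log n}{-\log V_{n,r}(\mu)} \geq \frac{\kappa\log n}{\log n - \log c_0}$, and the right-hand side tends to $\kappa$ as $n\to\infty$. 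Thus $\ul D_r(\mu) \geq \kappa$, and since $\kappa < \kappa_r$ is arbitrary, $\ul D_r(\mu) \geq \kappa_r$.

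Symmetrically I would prove $\ol D_r(\mu) \leq \kappa_r$. Fix any $\kappa' > \kappa_r$. By Theorem~\ref{theorem}, $\limsup_n n V_{n,r}(\mu)^{\kappa'/r} = 0$, so in particular $n V_{n,r}(\mu)^{\kappa'/r} \leq 1$ for all large $n$, giving $V_{n,r}(\mu) \leq n^{-r/\kappa'}$ and hence $-\log V_{n,r}(\mu) \geq \frac{r}{\kappa'}\log n$. It follows that $\frac{r\log n}{-\log V_{n,r}(\mu)} \leq \kappa'$ for all large $n$, so $\ol D_r(\mu) \leq \kappa'$; since $\kappa' > \kappa_r$ is arbitrary, $\ol D_r(\mu) \leq \kappa_r$. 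Combining, $\kappa_r \leq \ul D_r(\mu) \leq \ol D_r(\mu) \leq \kappa_r$, so the quantization dimension exists and $D_r(\mu) = \kappa_r$.

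I do not expect a genuine difficulty here, since the corollary is a formal consequence of Theorem~\ref{theorem}. The only points needing care are the elementary ones: ensuring $-\log V_{n,r}(\mu)$ is eventually positive (which follows from $V_{n,r}(\mu)\to 0$), and checking that letting $\kappa \uparrow \kappa_r$ and $\kappa' \downarrow \kappa_r$ pinches the lower and upper quantization dimensions to the common value $\kappa_r$.
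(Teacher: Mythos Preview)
Your argument is correct and follows exactly the route the paper indicates: the paper derives the corollary in one sentence by ``taking $\kappa, \kappa' \to \kappa_r$'' in the coefficient estimates of Theorem~\ref{theorem}, and your proposal simply spells out this limiting argument via logarithms. There is nothing to add.
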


\

\begin{remark} \label{tare} \
Notice that, in order  to obtain the proof of Theorem \ref{theorem} and of  Corollary \ref{cor-j}, it is \textbf{not possible} to use finite truncations with $M$ elements $\C S_M$ of the system  and associated self-similar measures $\mu_M$, and then to consider $\log V_{n_k, r}(\mu_M)$ when $n_k\to \infty$, followed by the  use of the estimates for the quantization dimension of $\mu_M$ from the finite case. This problem is due to the fact that the speed of convergence in $n_k$, in the formula for the quantization dimension of $\mu_M$, actually depends on each $M$ (when $M \to \infty$).

$\hfill\square$
\end{remark}

%We mention that the result in Corollary \ref{cor-j} was wrongly proved in M. K Roychowdhury, Quantization dimension for infinite self-similar probabilities, J. Math. Anal. Appl., vol. 383 (2011), 499--505.  Our ideas and methods here are completely different than in that paper.

We give now some specific infinite systems, when one can say more about the quantization.

\

\textbf{Examples:}

Consider a sequence of numbers $(s_i)_{i \ge 1}$ in the interval $(0, 1)$, such that $s_i = \gamma^i$, $i \ge 1,$ for some $\gamma \in (0, 1/2)$. Let us also take $p_i = s_i^a = \gamma^{ai}, i \ge 1$ and $p = (p_1, p_2, \ldots)$; in order to make $p$ a probabilistic vector, we will choose $a = \frac{\log 2}{|\log \gamma|}$.

We take then a strongly separated infinite iterated function system $\C S$, formed by the sequence of similarities $\C S = (S_i)_{ i \ge 1}$ of the unit disk $\Delta(0, 1)$ having contraction rates $s_i$ respectively and such that the boundary at infinity $\C S(\infty)$ is equal to the unit circle $S^1$. Consider also the  self-similar probability measure $\mu$, associated to $\C S$ and $p$.  Then, the self-similar measure $\mu$ is supported on the closure $\ol J$, which in this case is given by: $$\ol J = J \cup \mathop{\cup}\limits_{\omega \in \D N^{fin}} S_\omega(\C S(\infty)) = J \cup  \mathop{\cup}\limits_{\omega \in \D N^{fin}}S_\omega(S^1)$$
We notice that in this case $HD(J)  < 1$, but the upper box dimension of $J$  is larger than or equal to 1, since $\ol{\text{dim}}_B(J) \ge \ol{\text{dim}}_B(\C S(\infty)) =1$.
Now, one wants to estimate the quantization coefficients for the measure $\mu$.
According to Theorem \ref{theorem}, the quantization dimension of $\mu$ is equal to $\kappa_r$, where $\kappa_r$ satisfies $$\mathop{\sum}\limits_{i \ge 1} (p_i s_i^r)^{\frac{\kappa_r}{r+\kappa_r}} = 1$$
In our case, the above sum is just the sum for a geometric series, hence we obtain with the above expression for $s_i, p_i$ and the above exponent $a$,
that $$\mathop{\sum}\limits_{i \ge 1} (\gamma^{(a+r)t})^i = 1,$$
where $t =  \frac{\kappa_r}{r+\kappa_r}$. Hence $t = \frac{\log 2}{(a+r)|\log \gamma|} = \frac{\kappa_r}{r+\kappa_r}$.
Therefore, we obtain the quantization dimension $$D_r(\mu) = \kappa_r = \frac{r \log 2}{(a+r) |\log \gamma| - \log 2} = \frac{\log 2}{|\log \gamma|}$$
It is interesting to note that, in this particular case, the quantization dimension $D_r(\mu)$ does not depend on $r$. In general however, if the $p_j$'s are not of the form above, then the quantization dimension $D_r(\mu)$ should depend on $r$. We have also from Theorem \ref{theorem}  that the lower/upper quantization coefficients for $\mu$ satisfy:
    $$ 0 < \mathop{\liminf}\limits_{n\to \infty} n V_{n, r}(\mu)^\frac{\kappa}{r} \ \text{and} \ \mathop{\limsup}\limits_{n\to \infty}  n  V_{n, r}(\mu)^\frac{\kappa'}{r} = 0,  \ \forall \kappa < \log 2/|\log \gamma| < \kappa'$$

\

We notice that this example can be modified so that the images $S_i(\Delta)$ are arranged differently inside $\Delta$, and that the boundary at infinity $\C S(\infty)$ is more complicated, for instance we can imagine an example where it is a countable union of concentric circles $C_n, n \ge 1,$ centered at 0, with radii $c_n$ going to 0.
The corresponding self-similar measure $\mu$ will then be supported on the closure of the limit set $J$, namely on the compact set
$$\ol J = J \cup \mathop{\cup}\limits_{\omega \in \D N^{fin}} S_\omega\big(\mathop{\cup}\limits_n C_n \cup \{0\}\big)$$ Still, if we keep the same contraction rates $s_i$ and the probability vector $p = (p_1, p_2, \ldots)$ as before, then we will obtain the same quantization dimension $\kappa_r$ and quantization coefficients estimates as above.

$\hfill\square$

\

We want now to approximate the self-similar measure $\mu$ with discrete measures of finite support.
Denote by $\C M$ the set of probability measures on the compact set $X \subset \mathbb R^d$. Then,
\begin{align*} d_H (\mu, \gn) :=\sup \Big\{ \Big | \int_X g d\mu- \int_X g d\gn \Big | : \te{Lip } g\leq 1\Big\}, \ (\mu, \gn) \in \C M \times \C M, \end{align*} defines a metric on $\C M$. Then
$(\C M, d_H)$ is a compact metric space (see  \cite{B}). It is known that the $d_H$-topology and the weak topology, coincide on the space of probabilities with compact support (see \cite{Mat}).  In our case all measures are compactly supported.

First, since $X$ is compact we have $\int\|x\|^rd\gm(x)<\infty$, for any probability measure $\mu$ on $X$. For $r \in (0, \infty)$ and for two arbitrary  probabilities $\mu_1, \mu_2$, the \textbf{$L_r$-Kantorovich-Wasserstein  metric}  is defined  by the following formula (see for eg. \cite{GL1}):
\[\rho_r(\mu_1, \mu_2)=\inf_\gn \left(\int \|x-y\|^r d\nu(x, y)\right)^{\frac 1 r},\]
where the infimum is taken over all Borel probabilities $\nu$ on $X \times X$ with fixed \textit{marginal measures} $\mu_1$ and $\mu_2$, such that $(\pi_1)_*(\nu) = \mu_1$ and $(\pi_2)_*(\nu) = \mu_2$ for the canonical projections $\pi_1, \pi_2$ on the first, respectively second coordinates. \

Note that the weak topology, the topology induced by $d_H$, and the topology induced by $L_r$-metric $\rho_r$, \textit{all coincide} on the space $\C M$ (see for example \cite{Ru}).
Let us notice also that, for $r=1$, the $\rho_1$ metric is in fact equal to the $d_H$ metric in the compact case, as  shown by Kantorovich (see \cite{GL1}).

The next Lemma relates the quantization errors for a probability measure $P$, to the $L_r$-Kantorovich-Wasserstein distances between $P$ and discrete measures:

\begin{lemma} (\cite[Lemma~3.4]{GL1})  \label{lemma345}
Let $\C P_n$ denote the set of all discrete probability measures $Q$ on $X$ with $|\te{supp}(Q)|\leq n$. Then for any probability $P$, we have:
$$V_{n, r}(P)=\inf_{Q\in \C P_n} \rho_r^r(P, Q)$$
\end{lemma}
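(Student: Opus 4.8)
The plan is to establish the equality by proving the two inequalities
$$\inf_{Q\in \C P_n}\rho_r^r(P,Q)\le V_{n,r}(P)\qquad\text{and}\qquad V_{n,r}(P)\le \inf_{Q\in \C P_n}\rho_r^r(P,Q)$$
separately. In each direction I would exhibit an explicit competitor: for the first inequality a discrete measure together with a transport plan built from an arbitrary $n$-point center set, and for the second a point set extracted from the support of an arbitrary $Q\in\C P_n$.

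For the inequality $\inf_{Q}\rho_r^r(P,Q)\le V_{n,r}(P)$, I would fix an arbitrary finite set $\alpha=\set{a_1,\ldots,a_n}\ci\D R^d$. First I would introduce the nearest-point (Voronoi) partition $A_1,\ldots,A_n$ of $\D R^d$ associated with $\alpha$, where $A_i$ is the set of points whose closest element of $\alpha$ is $a_i$, breaking ties in favor of the smallest index so that the $A_i$ are Borel and partition $\D R^d$. I would then set $Q:=\sum_{i=1}^n P(A_i)\,\delta_{a_i}\in\C P_n$ and define the coupling $\nu$ as the image of $P$ under the map $x\mapsto (x,a_i)$ for $x\in A_i$; one checks directly that the two marginals of $\nu$ are $P$ and $Q$. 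Since $d(x,\alpha)=\|x-a_i\|$ for $x\in A_i$, this gives
$$\rho_r^r(P,Q)\le \int\|x-y\|^r\,d\nu(x,y)=\sum_{i=1}^n\int_{A_i}\|x-a_i\|^r\,dP(x)=\int d(x,\alpha)^r\,dP(x),$$
and taking the infimum over all $\alpha$ of cardinality at most $n$ yields the desired bound.

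For the reverse inequality, I would fix an arbitrary $Q\in\C P_n$ with support $\alpha:=\te{supp}(Q)$, so that $\te{card}(\alpha)\le n$, together with an arbitrary coupling $\nu$ of $P$ and $Q$. Since the second marginal of $\nu$ is $Q$, the plan $\nu$ is concentrated on $\D R^d\times\alpha$, so for $\nu$-almost every $(x,y)$ one has $y\in\alpha$ and hence $\|x-y\|\ge d(x,\alpha)$. Integrating, and using that the first marginal of $\nu$ is $P$ while $x\mapsto d(x,\alpha)^r$ depends on $x$ alone, I obtain
$$\int\|x-y\|^r\,d\nu(x,y)\ge\int d(x,\alpha)^r\,d\nu(x,y)=\int d(x,\alpha)^r\,dP(x)\ge V_{n,r}(P).$$
Taking the infimum first over couplings $\nu$ and then over $Q\in\C P_n$ completes the argument. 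The only genuine obstacle is measure-theoretic bookkeeping rather than conceptual difficulty: I would need to confirm that the Voronoi cells $A_i$ can be chosen Borel so as to partition the space (handled by the smallest-index tie-breaking) and that the transported measure $\nu$ indeed has the prescribed marginals; these verifications are routine, which is consistent with the result being the classical Lemma~3.4 of \cite{GL1}.
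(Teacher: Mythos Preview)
The paper does not supply its own proof of this lemma; it simply quotes the statement from \cite[Lemma~3.4]{GL1} and uses it as a black box. Your argument is correct and is essentially the standard proof found in \cite{GL1}: one inequality via the Voronoi partition and the induced deterministic transport plan, the other by observing that any coupling with a finitely supported second marginal dominates the nearest-point cost. There is nothing further to compare.
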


Now by using Lemma \ref{lemma345} and Theorem \ref{theorem}, we obtain the following result about the asymptotic behavior in $n$, of the \textbf{approximations} in $L_r$-metric of $\mu$, with \textbf{discrete} measures supported on $n$ points, when $n$ increases to $\infty$.

\begin{corollary}\label{asy}
In the setting of Theorem \ref{theorem}, let us consider the associated self-similar probability measure $\mu$. Then,  for every $r \in (0, \infty)$, there exists a unique number $\kappa_r \in (0, \infty)$ such that for arbitrary $\kappa, \kappa'$ with  $\kappa<\kappa_r< \kappa'$, the $L_r$-approximations of $\mu$ with discrete measures on $n$ points behave asymptotically as:
$$0<\liminf_{n\to\infty} n^{\frac{1}{\kappa}} \cdot \inf_{Q\in \C P_n} \rho_r(\mu, Q), \ \text{and} \  \limsup_{n \to \infty} n^{\frac{1}{\kappa'}} \cdot \inf_{Q\in \C P_n} \rho_r(\mu, Q) =0$$

\end{corollary}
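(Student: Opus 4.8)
The plan is to reduce the statement entirely to Theorem~\ref{theorem}, using the Graf--Luschgy identity recorded in Lemma~\ref{lemma345}. Since $\mu$ is supported on the compact set $\ol J \subset X$, the hypotheses of Lemma~\ref{lemma345} are met, and it gives $\inf_{Q \in \C P_n} \rho_r^r(\mu, Q) = V_{n, r}(\mu)$. Taking $r$-th roots, we obtain
$$\inf_{Q \in \C P_n} \rho_r(\mu, Q) = V_{n, r}(\mu)^{1/r} = e_{n, r}(\mu),$$
so the quantities in the corollary are just rescaled quantization errors. It therefore remains to translate the two estimates of Theorem~\ref{theorem} into statements about $n^{1/\kappa} e_{n, r}(\mu)$ and $n^{1/\kappa'} e_{n, r}(\mu)$.

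For the lower bound, I would use the elementary identity
$$n^{1/\kappa} V_{n, r}(\mu)^{1/r} = \big(n\, V_{n, r}(\mu)^{\kappa/r}\big)^{1/\kappa}.$$
As $\kappa > 0$, the map $x \mapsto x^{1/\kappa}$ is continuous and strictly increasing on $[0, \infty)$, hence commutes with $\liminf$; applying it to the first estimate of Theorem~\ref{theorem} (valid for $\kappa < \kappa_r$) yields
$$\liminf_{n\to\infty} n^{1/\kappa} V_{n, r}(\mu)^{1/r} = \Big(\liminf_{n\to\infty} n\, V_{n, r}(\mu)^{\kappa/r}\Big)^{1/\kappa} > 0,$$
which is precisely the claimed positivity. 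For the upper bound, the same identity with $\kappa'$ in place of $\kappa$, together with the fact that $x \mapsto x^{1/\kappa'}$ is continuous, increasing and sends $0$ to $0$, applied to the second estimate of Theorem~\ref{theorem} (valid for $\kappa' > \kappa_r$), give
$$\limsup_{n\to\infty} n^{1/\kappa'} V_{n, r}(\mu)^{1/r} = \Big(\limsup_{n\to\infty} n\, V_{n, r}(\mu)^{\kappa'/r}\Big)^{1/\kappa'} = 0.$$
Combining these two displays with the identity from the first paragraph completes the argument, the existence and uniqueness of $\kappa_r$ being already supplied by Theorem~\ref{theorem}.

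Honestly, I do not expect a genuine obstacle here: all the analytic content lives in Theorem~\ref{theorem} and in Lemma~\ref{lemma345}, and the corollary is a formal consequence obtained by a change of variables in the exponents. The only points needing (routine) care are the justification that the continuous, monotone power maps commute with $\liminf$ and $\limsup$, and the verification that Lemma~\ref{lemma345} indeed applies, which holds because $\mu$ has compact support inside $X$.
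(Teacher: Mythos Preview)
Your proposal is correct and follows exactly the approach indicated in the paper, which simply states that the corollary follows from Lemma~\ref{lemma345} and Theorem~\ref{theorem} without spelling out the details. Your write-up is in fact more complete than the paper's, since you make explicit the identity $n^{1/\kappa} V_{n,r}(\mu)^{1/r} = \big(n\, V_{n,r}(\mu)^{\kappa/r}\big)^{1/\kappa}$ and the routine fact that monotone continuous power maps commute with $\liminf$ and $\limsup$.
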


\

%\textbf{Acknowledgements.} E. Mihailescu supported in part by grant 2011-3-0269.

\

\

\  Eugen Mihailescu, \ \

Institute of Mathematics "Simion Stoilow" of the Romanian Academy, Calea Grivitei 21, Sector 1, Bucharest, Romania. \ \ \ \

%and

%Institut des Hautes \'Etudes Sci\'entifiques, Le Bois-Marie 35, route de Chartres 91440, Bures-sur-Yvette, France.

Email: Eugen.Mihailescu\@@imar.ro \ \ \ \ \
Webpage: www.imar.ro/$\sim$mihailes
\

\

\ Mrinal Kanti Roychowdhury, \ \

Department of Mathematics,
The University of Texas-Pan American,
1201 West University Drive, Edinburg, TX 78539, USA

Email: roychowdhurymk@utpa.edu \ \ \ \ Webpage: http://faculty.utpa.edu/roychowdhurymk

\end{document}